\documentclass[12pt]{amsart}

\usepackage{pb-diagram}



\oddsidemargin 0.5 cm
\evensidemargin 0.5 cm
\addtolength{\textwidth}{ 2.8 cm}
\addtolength{\textheight}{1.4cm}
\addtolength{\topmargin}{-1.2cm}

\usepackage{amsfonts}
\usepackage{amsmath}
\usepackage{amssymb}

\newtheorem{theorem}{Theorem}[section]
\newtheorem{proposition}{Proposition}[section]
\newtheorem{lemma}{Lemma}[section]

\newtheorem{definition}{Definition}[section]

\newtheorem{remark}{Remark}[section]

\numberwithin{equation}{section}

\begin{document}

\title{A uniformization theorem in complex Finsler geometry}

\author[Ningwei Cui, Jinhua Guo,  Linfeng Zhou]{Ningwei Cui$^{*}$, Jinhua Guo$^{**}$,  Linfeng Zhou$^\dagger$}

\address{$^{*}$ Department of Mathematics, Southwest Jiaotong University, Chengdu, 610031, P.R. China}
\email{ningweicui@swjtu.edu.cn}

\address{$^{**}$ School of Mathematics Science, East China Normal University, Shanghai, 200241, P.R. China}
\email{51185500004@stu.ecnu.edu.cn}

\address{$^\dagger$ School of Mathematics Science,  East China Normal University, Shanghai, 200241, P.R. China}
\email {lfzhou@math.ecnu.edu.cn}

\subjclass [2010] {53B40, 53C60}
\keywords {K\"ahler, weakly K\"ahler, holomorphic curvature, unitary invariant complex Finsler metric}

\thanks{
The first author is supported by NSFC (No. 11401490), the third author is supported by the National Key Research and Development Program of China under Grant
2018AAA0101001.}

\begin{abstract} 
In complex Finsler geometry, an open problem is: does there exist a weakly K\"ahler Finsler metric which is not K\"ahler?

In this paper, we give an affirmative answer to this open problem. More precisely, we construct a family of the weakly K\"ahler Finsler metrics which are non-K\"ahler. 
The examples belong to the unitary invariant complex Randers metrics. Furthermore, a uniformization theorem of the unitary invariant complex Randers metrics with constant holomorphic curvature is proved under the weakly K\"ahler condition. 

\end{abstract}

\maketitle

\section{Introduction}
In Hermitian geometry, the vanishing of the torsion of the Hermitian connection means that the metric is K\"ahler.  In the complex Finsler geometry, since the torsion of the Chern-Finsler connection has a horizontal part and a mixed part, a complex Finsler metric to be K\"ahler is quite different. There are three kinds of notions which are the strongly K\"ahler, K\"ahler and weakly K\"ahler metric corresponding to the vanishing of the different parts of the torsion of the Chern-Finsler connection \cite{AP}.

However, the relations of the three K\"ahler definition seem rather subtle. Chen and Shen proved that a K\"ahler Finsler metric is actually a strong K\"ahler Finsler metric \cite{CS}. The well-known examples of smooth complex Finsler metrics: Kobayashi and Carath\'eodory metrics, which agree on a bounded strictly convex domain, are weakly K\"ahler metrics \cite{AP}. It is open whether they are K\"ahler or not. In \cite{XZh2}, Xia and Zhong wrote: ``{\em we also do not know whether there exists a weakly K\"ahler Finsler metric which is not a K\"ahler Finsler metric.}" Thus the problem: {\em does there exist a weakly K\"ahler Finsler metric which is not K\"ahler},  remains open. 

In this paper, we give an affirmative answer to this open problem. More precisely, we construct a family of the complex Randers metrics which are weakly K\"ahler but not K\"ahler. Actually the following classification theorem is proved. 
\begin{theorem}
An unitary invariant complex Randers metric $F=\sqrt{r\phi(t,s)}$ defined on a domain $D \subset \mathbb{C}^n$, where $r:=|v|^2$, $t:=|z|^2$, $s:=\frac{|\langle z,v \rangle |^2}{r}$, $z\in D$, and $v\in T_zD$, is weakly K\"ahler if and only if
\[\phi=\Big (\sqrt{f(t)+\frac{tf^{'}(t)-f(t)}{2t}s}+\sqrt{\frac{tf^{'}(t)+f(t)}{2t}s}\Big )^2\]
or
\[\phi=f(t)+f^\prime(t)s\]
where $f(t)$ is a positive smooth function.   
\end{theorem}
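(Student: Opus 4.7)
The plan is to derive an explicit PDE on $\phi(t,s)$ from the weakly K\"ahler condition, impose the complex Randers structure, and solve the resulting ODEs.

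First, I would compute the fundamental tensor $g_{i\bar j} = \partial^2 F^2/\partial v^i \partial\bar v^j$ for $F^2 = r\phi(t,s)$. Using the chain rule through $r,t,s$, unitary invariance forces $g_{i\bar j}$ to decompose into terms built from $\delta_{ij}$, $\bar z^i z^j$, $\bar v^i v^j$, $\bar z^i v^j$, and $\bar v^i z^j$, with coefficients polynomial in $\phi,\phi_s,\phi_{ss}$. The spray coefficients $G^i$ and the horizontal torsion of the Chern--Finsler connection are then read off in this basis, and the weakly K\"ahler condition, namely the vanishing of the contraction of the horizontal torsion with $v^i\bar v^j$, translates into a single PDE involving $\phi,\phi_s,\phi_{ss},\phi_t,\phi_{ts}$ and the variables $t,s$. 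This reduction follows the template used by Zhong and collaborators for other classification problems of unitary invariant complex Finsler metrics.

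Second, I would parameterize the unitary invariant complex Randers metrics $F=\alpha+|\beta|$. Unitary invariance forces $\alpha^2 = a(t)r + b(t)rs$ for functions $a,b$ of $t$ and the $(1,0)$-form $\beta = c(t)\,\bar z_i\,dz^i$ for a function $c$ of $t$, so $|\beta|^2 = c(t)^2 rs$. Consequently,
\[
\phi(t,s) \;=\; a(t) + \bigl(b(t)+c(t)^2\bigr)s + 2c(t)\sqrt{s\bigl(a(t)+b(t)s\bigr)}.
\]
Substituting this into the weakly K\"ahler PDE produces an identity that is polynomial in $\sqrt{s(a+bs)}$. Since this square root is irrational in $s$, the rational and irrational parts must vanish separately, which yields a system of ODEs in the three unknowns $a,b,c$.

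Third, I would solve this system by splitting into two cases. If $c\equiv 0$, the irrational part vanishes automatically and the rational part forces $b = a'$, giving the Hermitian (in fact K\"ahler) branch $\phi = f(t) + f'(t)s$ with $f := a$. If $c\not\equiv 0$, the irrational part gives algebraic relations that pin down $b$ and $c$ in terms of $a$: setting $f := a$, I expect to recover $b = (tf'-f)/(2t)$ and $c^2 = (tf'+f)/(2t)$. Reassembling $\phi$ via the identities $P+Q = f+f's$ and $PQ = c^2 s(a+bs)$ with $P = f+\tfrac{tf'-f}{2t}s$, $Q = \tfrac{tf'+f}{2t}s$ yields the first branch $\phi = (\sqrt{P}+\sqrt{Q})^2$. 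The converse direction (that each listed $\phi$ is indeed weakly K\"ahler) is then a finite substitution check into the PDE.

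The main technical obstacle will be the careful derivation of the weakly K\"ahler PDE for the unitary invariant ansatz and the subsequent separation of the irrational part after the Randers substitution; the algebra of the square-root terms is intricate, but unitary symmetry keeps the number of independent tensor components manageable and one expects the PDE itself to be of low differential order in $\phi$.
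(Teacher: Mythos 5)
Your proposal follows essentially the same route as the paper: write the unitary invariant Randers metric as $\phi=(\sqrt{f+gs}+\sqrt{hs})^2$, substitute into the weakly K\"ahler PDE for $F=\sqrt{r\phi(t,s)}$ (which the paper imports from Zhong's work rather than re-deriving), separate the resulting identity into coefficients of the independent irrational terms in $\sqrt{s}$ and $\sqrt{f+gs}$, and run the case analysis $h\equiv 0$ versus $h\not\equiv 0$ to land on the Hermitian--K\"ahler branch $\phi=f+f's$ or on $g=\frac{tf'-f}{2t}$, $h=\frac{tf'+f}{2t}$. The only small point worth adding is that in the second branch the spurious solution $tf'+f=0$ must be discarded because it makes the Hermitian part $\alpha$ degenerate, as the paper notes.
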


\begin{remark} (1) When $\phi=f(t)+f^{\prime}(t)s$, it is easy to see that the metric $F$ is a Hermitian-K\"ahler metric. 

(2) In \cite{Zhong}, Zhong proved that if the unitary invariant complex Finsler metric $F$ is K\"ahler if and only if it is Hermiatian-K\"ahler. 
When $$\phi=\Big (\sqrt{f(t)+\frac{tf^{'}(t)-f(t)}{2t}s}+\sqrt{\frac{tf^{'}(t)+f(t)}{2t}s}\Big )^2,$$ obviously, the complex Finsler metric $F=\sqrt{r\phi(t,s)}$ is not Hermitian,  and thus $F$ is weakly K\"ahler but not K\"ahler. 
\end{remark}

As we know, the uniformization theorem in Hermitian geometry tells us that a complete, simple-connected K\"ahler manifolds of constant holomorphic (sectional) curvature is isometric to one of the standard models: the Fubini-Study metric on $\mathbb{C}P^n$, the flat metric on $\mathbb{C}^n$ and the Bergmann metric on the unit ball in $\mathbb{C}^n$  \cite{Tian}. In complex Finsler geometry, one natural problem is to classify the weakly K\"ahler or K\"ahler Finsler metric with constant holomorphic curvature.  

Roughly speaking, this problem is too ambitious to solve if one does not impose any condition on the metrics. In \cite{XZh1}, Xia and Zhong  classified the unitary invariant weakly complex Berwald metrics of constant holomorphic curvature.
However, if we assume the metrics are the unitary invariant Randers type, we can obtain a complete classification theorem which is similar to the uniformization theorem in Hermitian geometry. In this paper, we proved the following uniformization theorem.

\begin{theorem} \label{mt}
Let $F=\sqrt{r\phi(t,s)}$ be an unitary invariant complex Randers metric defined on a domain $D\subset \mathbb{C}^n$, where $r:=|v|^2$, $t:=|z|^2$, $s:=\frac{|\langle z,v \rangle |^2}{r}$. Assume $F$ is not a Hermitian metric. If $F$ is a weakly K\"ahler Finsler metric and has a constant holomorphic curvature $k$ if and only if 
\begin{enumerate}
\item $k=4$, $\phi=(\sqrt{\frac{t}{c^2+t^2}-\frac{t^2}{(c^2+t^2)^2}s}+\sqrt{\frac{c^2}{(c^2+t^2)^2}s})^2$ defined on $D=\mathbb{C}^n\setminus \{0\}$;
\item $k=0$, $\phi=c(\sqrt{t}+\sqrt{s})^2$ defined on $D=\mathbb{C}^n$;
\item $k=-4$, $\phi=\sqrt{\frac{t}{c^2-t^2}+\frac{t^2}{(c^2-t^2)^2}s}+\sqrt{\frac{c^2}{(c^2-t^2)^2}s}$ defined on $D=\{z: |z|<\sqrt{c}\}$
\end{enumerate}
where $c$ is a positive constant.
\end{theorem}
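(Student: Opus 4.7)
The plan is to combine Theorem 1.1 with a direct computation of the holomorphic curvature. Since $F$ is weakly K\"ahler and non-Hermitian, Theorem 1.1 rules out the Hermitian-K\"ahler alternative $\phi=f(t)+f'(t)s$ and forces
\[
\phi(t,s)=\Big(\sqrt{f(t)+\tfrac{tf'(t)-f(t)}{2t}\,s}+\sqrt{\tfrac{tf'(t)+f(t)}{2t}\,s}\Big)^{2}
\]
for some positive smooth function $f(t)$. The theorem therefore reduces to identifying those $f$ for which the holomorphic curvature of $F=\sqrt{r\phi(t,s)}$ is identically the prescribed constant $k$.

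The first step is to write down the holomorphic curvature $K_F$ of a general unitary invariant complex Finsler metric $F=\sqrt{r\phi(t,s)}$ as a rational function of $\phi$ and its partial derivatives $\phi_s$, $\phi_t$, $\phi_{ss}$, $\phi_{ts}$; such a formula is available in the work of Zhong and collaborators on unitary invariant complex Finsler metrics. Because $F$ is unitary invariant, $K_F$ depends only on $(t,s)$, so the condition $K_F\equiv k$ is a single scalar equation in two variables.

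The second step is to substitute the specific square-root form of $\phi$ into this formula. It is convenient to set $A(t,s):=f(t)+\tfrac{tf'(t)-f(t)}{2t}\,s$ and $B(t,s):=\tfrac{tf'(t)+f(t)}{2t}\,s$, so that $\phi=A+B+2\sqrt{AB}$, and to note the convenient identity $A_s+B_s=f'(t)$. All partial derivatives of $\phi$ then become rational in $A$, $B$, $f$, $f'$, $f''$. Since $A$ and $B$ are linear in $s$ with coefficients depending only on $f$ and $f'$, I expect the equation $K_F(t,s)\equiv k$ to decouple into a single second-order ODE for $f(t)$, with any residual $s$-dependence forced to vanish identically. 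The main obstacle is precisely this curvature calculation: carrying out all simplifications with the square roots, and verifying that $K_F\equiv k$ is not overdetermined but truly equivalent to one ODE in $f$, is the bulk of the work.

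The final step is to solve the resulting ODE. After absorbing positive constants by an overall scaling of $f$, direct integration in the three sign cases $k>0$, $k=0$, $k<0$ is expected to yield respectively $f(t)=\tfrac{t}{c^{2}+t^{2}}$, $f(t)=ct$, and $f(t)=\tfrac{t}{c^{2}-t^{2}}$, for a positive constant $c$. Substituting back into $\phi$ reproduces the three expressions in (1)--(3); for example, $f(t)=\tfrac{t}{c^{2}+t^{2}}$ gives $\tfrac{tf'-f}{2t}=-\tfrac{t^{2}}{(c^{2}+t^{2})^{2}}$ and $\tfrac{tf'+f}{2t}=\tfrac{c^{2}}{(c^{2}+t^{2})^{2}}$, matching case (1). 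The admissible domains $\mathbb{C}^n\setminus\{0\}$, $\mathbb{C}^n$, and $\{z:|z|<\sqrt{c}\}$ are read off as the sets where $f>0$ and both radicands of $\phi$ stay nonnegative; the Cauchy--Schwarz bound $s\le t$ is used to verify this. A direct check that each of the three metrics has the claimed constant holomorphic curvature closes the ``if'' direction, and the derivation above gives the ``only if'' direction.
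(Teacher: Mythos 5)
Your proposal is correct and follows essentially the same route as the paper: reduce via the weakly K\"ahler Randers classification to the one-parameter family determined by $f(t)$, impose $K_F\equiv k$, observe that the resulting identity in $\sqrt{s}$ forces a single ODE in $f$ (in the paper, the vanishing of the leading coefficients $A_0(t)$, $B_0(t)$, $C_0(t)$ gives $tf'-f=0$, $tf'+2tf^2-f=0$, $tf'-2tf^2-f=0$), and integrate to get $f=ct$, $f=t/(c^2+t^2)$, $f=t/(c^2-t^2)$. The only notable difference is that the paper substitutes into its simplified weakly-K\"ahler curvature formula $K_F=-\tfrac{2}{\phi}\{s(W_t+W_s)-s^2(t-s)W_s^2/U_s+W\}$ in the variables $U,W$ rather than the general rational formula in $\phi$ and its derivatives, which makes the computation you correctly flag as the bulk of the work considerably more tractable.
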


\begin{remark} It is hopeful to generalize above theorem to the more general complex Randers metrics. 

\end{remark}

The rest of this article is organized as follows. In section 2, we recall some notations and formulas in complex Finsler geometry. In section 3, we introduce $U$ and $W$ to simplify
the  weakly K\"ahler equation of the unitary invariant complex Finsler metrics. We also obtain the formula of the holomorphic curvature of the unitary invariant complex Finsler metrics under the weakly K\"ahler condition. In section 4, we give the examples of the unitary invariant complex Randers metrics which are weakly K\"ahler but not K\"ahler and Theorem \ref{mt} is proved.

\section{Preliminaries}

Let $M$ be an $n$-dimensional complex manifold. 
The canonical complex structure $J$ acts on  the complexified tangent bundle $T_{\mathbb{C}}M$ so that 
\[T_{\mathbb{C}}M=T^{1,0}M \oplus T^{0,1}M\]
where $T^{1,0}M$ is called the holomorphic tangent bundle. 

The set $\{z^1, \dots, z^n\}$ is the local complex coordinate on $M$, where $z^\alpha=x^\alpha+ix^{n+\alpha}$, $1\leq\alpha\leq n$,  and  
$\{x^1,\dots,x^n,x^{n+1},\dots,x^{2n}\}$ is the local real coordinate on $M$.  Let
\[\frac{\partial }{\partial z^\alpha}:=\frac{1}{2}(\frac{\partial}{\partial x^\alpha}-\sqrt{-1}\frac{\partial}{\partial x^{\alpha+n}} ), \quad \frac{\partial }{\partial \bar{z}^\alpha}:=\frac{1}{2}(\frac{\partial}{\partial x^\alpha}+\sqrt{-1}\frac{\partial}{\partial x^{\alpha+n}} ).\]
The set $\{\frac{\partial}{\partial z^1}, \dots, \frac{\partial}{\partial z^n} \}$ and $\{\frac{\partial}{\partial \bar{z}^1}, \dots, \frac{\partial}{\partial \bar{z}^n} \}$ are the local frames of $T^{1,0}M$ and  $T^{0,1}M$ respectively. Thus for any $(z, v)\in T^{1,0}M$,  the vector $v$ can  be expressed as $v=v^\alpha \frac{\partial}{\partial z^\alpha}$.

Similar to the Hermitian metric in complex geometry, the complex Finsler metric is defined as the follows.

\begin{definition}(See \cite{AP})
A complex Finsler metric $F$ on a complex manifold $M$ is a continuous function $F: T^{1,0}M\rightarrow [0, +\infty)$ satisfying
\begin{enumerate}
\item $G(z,v)=F^2(z,v)\geq 0$ and $G(z,v)=0$ if and only if $v=0$;
\item $G(z,\lambda v)=|\lambda|^2G(z,v)$ for all $(z,v)\in T^{0,1}M$ and $\lambda\in \mathbb{C}$;
\item $G$ is smooth on $T^{0,1}M$, we say $F$ is a smooth Finsler metric. 
\end{enumerate}
\end{definition}

\begin{definition} A complex Finsler metric $F$ is called strongly pseudo-convex if the Levi matrix 
   \[(G_{\alpha\bar{\beta}})=(\frac{\partial ^2 G}{\partial v^{\alpha}\partial \bar{v}^{\beta}})\]
is positive definite on $T^{0,1}M$. The tensor $G_{\alpha\bar{\beta}}dz^\alpha\otimes d\bar{z}^\beta$ is called the fundamental tensor. 
\end{definition}

There are several complex Finsler connections in complex Finsler geometry such as the Chern-Finsler connection and the complex Berwald connection. For our convenience, we use the Chern-Finsler connection to derive the holomorphic curvature. 

For a function $G(z,v)$ defined on $T^{0,1}M$, we denote 
\[G_{\alpha}:=\frac{\partial G}{\partial v^{\alpha}},\quad G_{;\alpha}:=\frac{\partial G}{\partial z^{\alpha}},\quad G_{\alpha;\bar{\beta}}:=\frac{\partial^2 G}{\partial v^\alpha\partial \bar{z}^{\beta}}.\]
Let
\[\frac{\delta}{\delta z^\alpha}:=\frac{\partial}{\partial z^\alpha}-N^{\beta}_{\alpha}\frac{\delta}{\delta v^{\beta}},\quad \delta v^\alpha:=dv^{\alpha}+N^{\alpha}_{\beta}dz^\beta,\quad N^\alpha_\beta:=G^{\alpha\bar{\gamma}}G_{\bar{\gamma};\beta},\]
then the tangent bundle of $T^{0,1}M$  splits into the horizontal and the vertical parts, i.e.: \[T_{\mathbb{C}}\tilde{M}=\mathcal{H}\oplus \bar{\mathcal{H}}\oplus \mathcal{V}\oplus\bar{\mathcal{V}}\] where $\tilde{M}=T^{0,1}M$, $\mathcal{H}=\text{span}\{\frac{\delta}{\delta z^{\alpha}}\}$ and $\mathcal{V}=\text{span}\{\frac{\partial}{\partial v^{\alpha}}\}$. The Chern-Finsler connection 1-forms are given by
\[\omega^\alpha_\beta=G^{\alpha\bar{\gamma}}\partial G_{\beta\bar{\gamma}}=\Gamma^\alpha_{\beta;\gamma}dz^\gamma+C^{\alpha}_{\beta\gamma}\delta v^{\gamma},\]
where the connection coefficients can be written as
\[\Gamma^\alpha_{\beta; \gamma}:=G^{\alpha\bar{\eta}}\frac{\delta G_{\beta\bar{\eta}}}{\delta z^\gamma}, \quad C^\alpha_{\beta\gamma}:=G^{\alpha\bar{\eta}}\frac{\partial G_{\beta\bar{\eta}}}{\partial v^\gamma}.\]
The curvature 2-forms $\Omega^\alpha_\beta$ are 
\[\Omega^\alpha_\beta:=\bar{\partial }\omega^\alpha_\beta=R^\alpha_{\beta;\gamma\bar{\eta}}dz^\gamma\wedge d\bar{z}^\eta+S^\alpha_{\beta\gamma;\bar{\eta}}\delta v^\gamma\wedge d\bar{z}^\eta+P^\alpha_{\beta\bar{\eta};\gamma}dz^\gamma\wedge \delta\bar{v}^\eta+Q^\alpha_{\beta\gamma\bar{\eta}}\delta v^\gamma\wedge \delta\bar{v}^\eta, \]
where 
\begin{eqnarray*}
R^\alpha_{\beta;\gamma\bar{\eta} }& = & -\delta_{\bar{\eta}}(\Gamma^\alpha_{\beta;\gamma})-C^\alpha_{\beta\mu}\delta_{\bar{\eta}}(N^\mu_\gamma),\\
 S^\alpha_{\beta\gamma;\bar{\eta}}& = & -\delta_{\bar{\eta}}(C^\alpha_{\beta\gamma}),\\
 P^\alpha_{\beta\bar{\eta};\gamma}& = & -\dot{\partial}_{\bar{\eta}}(\Gamma^\alpha_{\beta;\gamma})-C^\alpha_{\beta\mu}\dot{\partial}_{\bar{\eta}}(N^\mu_\gamma),\\
 Q^\alpha_{\beta\gamma\bar{\eta}}& = &-\dot{\partial}_{\bar{\eta}}(C^\alpha_{\beta\gamma}).
\end{eqnarray*}

\begin{definition} (See \cite{AP}) The holomorphic curvature of a complex Finsler metric $F$ is defined as
\[K_F(v):=\frac{1}{G^2}G_{\alpha}R^\alpha_{\beta;\gamma\bar{\eta} }v^{\beta}v^{\gamma}\bar{v}^{\eta}=-\frac{2}{G^2}G_{\alpha}\delta_{\bar{\gamma}}(N^\alpha_{\beta})v^\beta\bar{v}^{\gamma} .\]
\end{definition}

Similar to the Hermitian metric in complex geometry, the $(2,0)$-torsion of the Chern-Finsler connection is 
\[\theta(X,Y):=\nabla _{X}Y-\nabla_{Y}X-[X,Y],\ \  X, Y\in \mathcal{H}\oplus\mathcal{V}. \]
The K\"ahler condition means the above $(2,0)$ torsion is vanishing. 

\begin{definition} (See \cite{AP}) Let $F$ be a complex Finsler metric, and $\chi=v^\alpha\frac{\delta}{\delta z^{\alpha}}\in \Gamma (\mathcal{H})$ be the radial horizontal field. Then $F$ is
\begin{enumerate}
\item strongly K\"ahler, if $\theta(X,Y)=0$ for any $X, Y\in \mathcal{H}$;
\item K\"ahler, if $\theta(X,\chi)=0$  for any $X\in \mathcal{H}$;
\item weakly K\"ahler, if $\langle \theta(X,\chi), \chi\rangle=0$ for any $X\in \mathcal{H}$.
\end{enumerate}
\end{definition}

In a local coordinate, the torsion $\theta$ is given by 
\[\theta=(\Gamma^{\alpha}_{\beta;\gamma}dz^\beta\wedge dz^{\gamma}+C^\alpha_{\beta\gamma}\delta v^{\beta}\wedge dz^{\gamma})\otimes \frac{\delta}{\delta z^{\alpha}}.\]
Hence $F$ is a strongly K\"ahler metric if and only if $\Gamma^{\alpha}_{\beta;\gamma}=\Gamma^{\alpha}_{\gamma;\beta}$; $F$ is a K\"ahler metric if and only if $\Gamma^{\alpha}_{\beta;\gamma}v^{\gamma}=\Gamma^{\alpha}_{\gamma;\beta}v^{\gamma}$; $F$ is a weakly K\"ahler metric if and only if $G_{\alpha}\Gamma^{\alpha}_{\beta;\gamma}v^{\gamma}=G_{\alpha}\Gamma^{\alpha}_{\gamma;\beta}v^{\gamma}$.

\section{Unitary invariant Finsler metrics}

In complex Finsler geometry, there lack the canonical complex Finsler metrics comparing to Hermitian geometry.  In real Finsler geometry, the third author introduced the spherically symmetric Finsler metrics  which are invariant under any rotation in $\mathbb{R}^n$ \cite{Zhou}. Similarly, in \cite{Zhong}, Zhong introduced the unitary invariant complex Finsler metrics, which are invariant under any unitary action in $\mathbb{C}^n$, and gave an investigation on the complex  Chern-Finsler connection and holomorphic curvature of the metrics. In this section, we will recall some results in \cite{Zhong} and simplify some formulas by introducing $U$ and $W$.

\begin{definition} (See \cite{Zhong})
A complex Finsler metric $F$ on a domain $D \subset \mathbb{C}^n$ is called unitary invariant if $F$ satisfies
\[F(Az, Av)=F(z,v)\]
for any $(z,v)\in T^{1,0}M$ and $A\in U(n)$, where $U(n)$ are the unitary matrices over the complex number field $\mathbb{C}$.
\end{definition}

\begin{theorem} (See \cite{XZh1}) Let $F$ be a strongly pseudo-convex complex Finsler metric defined on a domain $D\subset \mathbb{C}^n$. The metric $F$ is unitary invariant if and only if 
there exists a smooth function $\phi(t,s): [0, +\infty)\times [0,+\infty)\rightarrow (0,+\infty)$ such that
\[F(z,v)=\sqrt{r\phi(t,s)}, \quad r:=|v|^2, t:=|z|^2, s:=\frac{|\langle z,v \rangle |^2}{r}\]
for every $(z,v)\in T^{1,0}D$.
\end{theorem}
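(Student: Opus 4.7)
The plan is to reduce the problem to invariant theory for the compact group $U(n)$ acting diagonally on $\mathbb{C}^n \oplus \mathbb{C}^n$, combined with the complex homogeneity of $G := F^2$. The ``if'' direction is immediate: any $A \in U(n)$ preserves $|z|^2$, $|v|^2$, and $|\langle z, v\rangle|^2$, so the three arguments $t, r, s$ are unitary invariant, and therefore so is $F = \sqrt{r\phi(t,s)}$.

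For the ``only if'' direction, I would first invoke the first fundamental theorem of invariant theory for $U(n)$ acting on $\mathbb{C}^n \oplus \mathbb{C}^n$: the ring of polynomial invariants is generated by $|z|^2$, $|v|^2$, $\langle z,v\rangle$, and $\overline{\langle z,v\rangle}$. Combined with the classical extension to smooth invariants under a compact Lie group action, this gives
\begin{equation*}
G(z,v) = H\!\left(|z|^2,\,|v|^2,\,\langle z,v\rangle,\,\overline{\langle z,v\rangle}\right)
\end{equation*}
for some smooth function $H$. Next I exploit the defining homogeneity $G(z,\lambda v)=|\lambda|^2 G(z,v)$. Taking $\lambda = e^{i\theta}$ forces $H$ to depend on its last two arguments only through the product $u := |\langle z,v\rangle|^2$, so $G = \widetilde H(t, r, u)$ for a smooth $\widetilde H$. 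Taking $\lambda \in \mathbb{R}_{>0}$ then yields $\widetilde H(t, \lambda^2 r, \lambda^2 u) = \lambda^2 \widetilde H(t,r,u)$, i.e., $\widetilde H$ is positively homogeneous of degree one in the pair $(r,u)$. Setting $s := u/r$ and applying Euler's identity gives $\widetilde H(t,r,u) = r\,\phi(t,s)$, hence $F = \sqrt{r\phi(t,s)}$; positivity $\phi > 0$ follows from $F > 0$ off the zero section together with strong pseudo-convexity.

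The main obstacle will be ensuring the smoothness of $\phi$ up to the boundary, since the change of variables $(r,u) \mapsto (r,s) = (r, u/r)$ is singular as $r \to 0$. The cleanest way to handle this is to argue directly with $\widetilde H$ on the open locus $\{r > 0\}$ and to treat the identity $G = r\phi(t,s)$ as a relation between the genuinely smooth $\widetilde H$ and the formal symbol $\phi$; degree-one homogeneity in $(r,u)$ guarantees that $\widetilde H/r$ descends to a smooth function of $(t, u/r)$ on this locus. By Cauchy--Schwarz the physically accessible range of the arguments is $\{0 \le s \le t\}$, and smoothness of $\phi$ there is what is actually needed; the extension of $\phi$ to all of $[0,\infty)\times[0,\infty)$ stated in the theorem is a matter of convention (e.g.\ via a Whitney-type extension) and does not affect the metric $F$.
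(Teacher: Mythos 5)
This theorem is quoted in the paper from \cite{XZh1} without proof, so there is no in-paper argument to compare against; judged on its own terms, your outline is correct. The route you take --- the first fundamental theorem for $U(n)$ acting on two copies of $\mathbb{C}^n$ (invariants generated by $|z|^2$, $|v|^2$, $\langle z,v\rangle$, $\overline{\langle z,v\rangle}$), Schwarz's theorem to pass from polynomial to smooth invariants, then the two homogeneity reductions ($\lambda=e^{i\theta}$ to collapse the last two arguments to $u=|\langle z,v\rangle|^2$, and $\lambda>0$ to factor out $r$) --- is genuinely different from the standard argument in the cited source, which normalizes each orbit explicitly: one chooses $A\in U(n)$ sending $z$ to $\sqrt{t}\,e_1$ and $v$ into the span of $e_1,e_2$ with first component $\overline{\langle z,v\rangle}/\sqrt{t}$, so that $F(z,v)=F(Az,Av)$ visibly depends only on $t$, $r$ and $|\langle z,v\rangle|^2$, and homogeneity then produces $\phi$. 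The explicit normalization is more elementary and gives $\phi$ directly as $G$ evaluated at the normal form; your invariant-theoretic argument costs more machinery but treats smoothness across the degenerate strata ($z=0$, or $v$ proportional to $z$) more systematically, which is precisely where the elementary proof needs extra care. Two small points to tighten: (i) $G=F^2$ is in general only smooth off the zero section, so Schwarz's theorem must be invoked in its relative form on the invariant open set $\{v\neq0\}$ --- or, more cleanly, applied once for the larger group $U(n)\times U(1)$, with the $U(1)$ factor acting on $v$ alone, whose invariants are exactly $t$, $r$, $u$, which also removes your separate averaging step; (ii) the resulting $\phi(t,s)=\widetilde H(t,1,s)$ is canonically determined only on the Cauchy--Schwarz region $\{0\le s\le t\}$ (intersected with the range of $|z|^2$ on $D$), so the claim that $\phi$ lives on all of $[0,+\infty)\times[0,+\infty)$ is, as you say, a matter of extension and does not affect $F$. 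Neither point is a genuine gap.
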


The fundamental tensor of $F=\sqrt{r\phi(t,s)}$ can be written as 
\[G_{\alpha\bar{\beta}}=(\phi-s\phi_s)\delta_{\alpha\bar{\beta}}+r\phi_{ss}s_{\alpha}s_{\bar\beta}+\phi_s \bar{z}^\alpha z^\beta.\]
The determinant of the matrix $(G_{\alpha,\bar{\beta}})$ is 
\[\det(G_{\alpha,\bar{\beta}})=\{(\phi-s\phi_s)[\phi+(t-s)\phi_s]+s(t-s)\phi\phi_{ss}\}(\phi-s\phi_s)^{n-2}.\] 

\begin{proposition} (See \cite{Zhong}) The unitary invariant complex Finsler metric $F=\sqrt{r\phi(t,s)}$ is strongly pseudo-convex on a domain $D\subset \mathbb{C}^n$ if and only if 
\[\phi-s\phi_s>0, \quad (\phi-s\phi_s)[\phi+(t-s)\phi_s]+s(t-s)\phi\phi_{ss}>0.\]
\end{proposition}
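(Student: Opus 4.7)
The plan is to analyze positive definiteness of the matrix $(G_{\alpha\bar\beta})$ directly from its explicit form by decomposing $\mathbb{C}^{n} = V \oplus V^{\perp}$, where $V := \mathrm{span}_{\mathbb{C}}\{z,v\}$ (generically two-dimensional) and $V^{\perp}$ is its Hermitian orthogonal complement. Since $(G_{\alpha\bar\beta})$ is a multiple of the identity plus two rank-one corrections, this decomposition reduces the question to a $2\times 2$ Hermitian problem on $V$.

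Differentiating $s = |\langle z,v\rangle|^{2}/r$ in $v^{\alpha}$ gives $s_{\alpha} = (\bar{z}^{\alpha}\langle z,v\rangle - s\,\bar{v}^{\alpha})/r$, a linear combination of $\bar z^{\alpha}$ and $\bar v^{\alpha}$. Consequently, for any $\xi,\eta\in V^{\perp}$ both rank-one pieces $r\phi_{ss}\,s_{\alpha}s_{\bar\beta}$ and $\phi_{s}\,\bar z^{\alpha}z^{\beta}$ annihilate the pair $(\xi^{\alpha},\bar\eta^{\beta})$, so that $(G_{\alpha\bar\beta})|_{V^{\perp}} = (\phi - s\phi_{s})\,\mathrm{Id}$. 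Positive definiteness on $V^{\perp}$ is thus equivalent to $\phi - s\phi_{s} > 0$, the first inequality. Restricting to $V$ gives a $2\times 2$ Hermitian block $H$; factoring out the $(n-2)$ eigenvalues $(\phi - s\phi_s)$ from $V^{\perp}$ in the determinant formula stated just before the proposition, one reads off
\[\det H = (\phi - s\phi_{s})[\phi + (t-s)\phi_{s}] + s(t-s)\phi\phi_{ss},\]
the second stated quantity. Since $v\in V$ and $G_{\alpha\bar\beta}v^{\alpha}\bar v^{\beta} = r\phi > 0$, the block $H$ automatically has at least one positive eigenvalue, so $H$ is positive definite if and only if $\det H > 0$. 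Assembling the two blocks then yields the proposition.

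The remaining subtlety is the degenerate locus where $\dim_{\mathbb{C}}V < 2$: at $z = 0$ one has $t = s = 0$ and both inequalities collapse to $\phi > 0$; when $v$ is proportional to $z$ one has $s = t$ and the second quantity becomes $\phi(\phi - s\phi_{s})$, so jointly the two conditions reduce to $\phi > 0$ and $\phi - s\phi_{s} > 0$, matching positive definiteness with $V^{\perp}$ of codimension one. The main technical obstacle in this plan is therefore only the careful bookkeeping of the Hermitian orthogonality relations in the $V^{\perp}$ block and the correct handling of these boundary cases; the core equivalence follows from routine linear algebra once the $2\times 2$ reduction is in place.
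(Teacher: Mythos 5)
The paper does not prove this proposition at all --- it is quoted from \cite{Zhong} with only the formulas for $G_{\alpha\bar\beta}$ and $\det(G_{\alpha\bar\beta})$ recorded beforehand --- so there is no in-paper argument to compare against. Your proposal is, on its own terms, a correct and complete proof, and it is the natural one: split $\mathbb{C}^n=V\oplus V^\perp$ with $V=\mathrm{span}\{z,v\}$, observe that $s_\alpha=(\langle z,v\rangle\bar z^\alpha-s\bar v^\alpha)/r$ lies in the span of $\bar z^\alpha,\bar v^\alpha$, so the two rank-one corrections vanish on $V^\perp$ and the matrix is $(\phi-s\phi_s)\mathrm{Id}$ there; then reduce the $V$-block to its determinant using $G_{\alpha\bar\beta}v^\alpha\bar v^\beta=G=r\phi>0$ (Euler's relation for the $(1,1)$-homogeneity), which forces one positive eigenvalue so that positivity of the $2\times2$ block is equivalent to $\det H>0$; finally read off $\det H$ from the stated determinant factorization. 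Two small points you should make explicit when writing this up: (i) you need not only that the restriction of $(G_{\alpha\bar\beta})$ to $V^\perp$ is scalar but also that the off-diagonal block $V\times V^\perp$ vanishes (it does, by the same orthogonality computation applied to $s_{\bar\beta}\bar\eta^\beta$ and $z^\beta\bar\eta^\beta$), since block-diagonality is what justifies both the determinant factorization $\det G=\det H\cdot(\phi-s\phi_s)^{n-2}$ and the reduction of positive definiteness to the two blocks separately; and (ii) the equivalence is pointwise at each $(z,v)$ and the inequalities are only required on the realizable parameter set $0\le s\le t$, with the degenerate loci $z=0$ and $v\parallel z$ handled exactly as you indicate. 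With those remarks your argument is airtight.
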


In \cite{Zhong}, Zhong showed that a strongly pseudo-convex complex Finsler metric $F=\sqrt{r\phi(t,s)}$ defined on a domain $D\subset \mathbb{C}^n$ is a K\"ahler Finsler metric if and only if $\phi(t,s)=a(t)+a^{\prime}(t)s$, where $a(t)$ is a positive smooth function satisfying $a(t)+ta^{\prime}(t)>0$. For the weakly K\"ahler case, Zhong obtained the following 
result. 
\begin{theorem} (See \cite{Zhong})
 The unitary invariant complex Finsler metric $F=\sqrt{r\phi(t,s)}$ defined on a domain $D\subset \mathbb{C}^n$ is weakly K\"ahler if and only if 
 \begin{equation}\label{Kaeq}(\phi-s\phi_s)[\phi+(t-s)\phi_s][\phi_s-\phi_t+s(\phi_{st}+\phi_{ss})]+s(t-s)\phi_{ss}[\phi(\phi_s-\phi_t)+s\phi_s(\phi_t+\phi_s)]=0.\end{equation}
\end{theorem}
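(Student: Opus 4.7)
The plan is to unpack the intrinsic weakly K\"ahler characterization $G_\alpha \Gamma^\alpha_{\beta;\gamma} v^\gamma = G_\alpha \Gamma^\alpha_{\gamma;\beta} v^\gamma$ directly for the ansatz $G = r\phi(t,s)$, using $U(n)$-invariance to cut down the number of components that actually have to be computed.

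Since both sides transform covariantly under $U(n)$, it suffices to verify the identity at a single representative of each orbit in $T^{1,0}D$. A generic orbit contains a point of the form $z = (\sqrt{t}, 0, \ldots, 0)$, $v = (v^1, v^2, 0, \ldots, 0)$ with $|v^1|^2 = sr/t$ and $|v^2|^2 = r(t-s)/t$. In this frame the auxiliary vectors entering
\[
G_{\alpha\bar\beta} = (\phi - s\phi_s)\delta_{\alpha\bar\beta} + r\phi_{ss}\, s_\alpha s_{\bar\beta} + \phi_s\, \bar z^\alpha z^\beta
\]
are concentrated on low indices: $\bar z^\alpha$ is supported at $\alpha = 1$ and $s_\alpha = \partial s/\partial v^\alpha$ is supported at $\alpha \in \{1,2\}$, so $(G_{\alpha\bar\beta})$ splits as a $2 \times 2$ block sitting inside a scalar block $(\phi - s\phi_s) I_{n-2}$. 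This inverts trivially, and the $2\times 2$ determinant reproduces the factor $(\phi - s\phi_s)[\phi + (t-s)\phi_s] + s(t-s)\phi\phi_{ss}$ already identified in the preceding proposition on strong pseudo-convexity.

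With $G^{\alpha\bar\eta}$ in hand, the next step is to compute $G_{\bar\eta;\beta}$ from $G=r\phi$, assemble $N^\alpha_\beta = G^{\alpha\bar\eta} G_{\bar\eta;\beta}$ and $\delta G_{\beta\bar\eta}/\delta z^\gamma = \partial G_{\beta\bar\eta}/\partial z^\gamma - N^\mu_\gamma\, \partial G_{\beta\bar\eta}/\partial v^\mu$, and finally $\Gamma^\alpha_{\beta;\gamma} = G^{\alpha\bar\eta}\, \delta G_{\beta\bar\eta}/\delta z^\gamma$. Only the derivatives $\phi_t,\phi_s,\phi_{ss},\phi_{st}$ appear, weighted by monomials in $z^1,v^1,v^2$ and their conjugates. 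By the normal-frame choice, after antisymmetrizing $\Gamma^\alpha_{\beta;\gamma}$ in $\beta,\gamma$ and contracting with $v^\gamma$ and $G_\alpha$, only a handful of index pairs $(\beta,\gamma)\in\{1,2\}^2$ contribute, and the weakly K\"ahler condition collapses to a single scalar PDE in $\phi$.

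The hard part is the bookkeeping rather than the ideas: the operator $\delta/\delta z^\gamma$ mixes horizontal and vertical directions through $N^\mu_\gamma$, so many cross-terms among $\phi_t,\phi_s,\phi_{ss},\phi_{st}$ appear, weighted by powers of $s$ and $t-s$. I expect the three factors $(\phi-s\phi_s)$, $\phi+(t-s)\phi_s$, and the ``discriminant'' $s(t-s)\phi_{ss}$ to reappear through the inversion step and to reassemble into the product structure on the left-hand side of (3.1); introducing the shorthands $U$ and $W$ announced for Section~3 is presumably precisely the substitution that renders these factorizations transparent, so that the single surviving scalar equation is exactly the displayed one.
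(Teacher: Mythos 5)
First, note that the paper does not prove this statement at all: it is quoted verbatim from Zhong's paper \cite{Zhong}, so there is no in-paper argument to compare your route against. Judged on its own terms, your strategy is the standard and sensible one — use $U(n)$-invariance to evaluate everything at a representative point $z=(\sqrt{t},0,\dots,0)$, $v=(v^1,v^2,0,\dots,0)$ with $|v^1|^2=sr/t$, $|v^2|^2=r(t-s)/t$, where $(G_{\alpha\bar\beta})$ becomes a $2\times 2$ block plus a scalar block and is explicitly invertible. That setup is correct, and the identification of the $2\times 2$ determinant with the pseudo-convexity factor is right.

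The genuine gap is that the proof stops exactly where the content of the theorem begins. The theorem is not the assertion that \emph{some} scalar PDE in $\phi$ characterizes the weakly K\"ahler condition — that much is clear from invariance — but that the PDE is precisely \eqref{Kaeq}, with those coefficients. You never compute $N^\alpha_\beta$, $\Gamma^\alpha_{\beta;\gamma}$, or the contraction $G_\alpha(\Gamma^\alpha_{\beta;\gamma}-\Gamma^\alpha_{\gamma;\beta})v^\gamma$; the conclusion is carried by ``I expect'' and ``presumably.'' Two specific points are also elided. (i) The free index $\beta$ ranges over all of $\{1,\dots,n\}$, not just $\{1,2\}$; you need the remark that for $\beta\ge 3$ the quantity $G_\alpha(\Gamma^\alpha_{\beta;\gamma}-\Gamma^\alpha_{\gamma;\beta})v^\gamma$ is a $U(n-2)$-equivariant covector on $\mathbb{C}^{n-2}$ and hence vanishes identically. (ii) Even then, $\beta=1$ and $\beta=2$ a priori yield two scalar equations, and one must verify that they collapse to the single equation \eqref{Kaeq} (one is trivially satisfied or proportional to the other); without this check the stated ``if and only if'' is not established. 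As it stands the proposal is a plausible computation plan, not a proof.
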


Actually, we can simplify the equation (\ref{Kaeq}) and yield the following theorem.

\begin{theorem} \label{wkeq}
An unitary invariant complex Finsler metric $F=\sqrt{r\phi(t,s)}$ defined on a domain $D\subset \mathbb{C}^n$ is weakly K\"ahler if and only if
\begin{equation}\label{nKaeq} sU(U-t)W_s-s(U-t)U_sW-2(U-s)U_s=0\end{equation}
where $U=\frac{s\phi+s(t-s)\phi_s}{\phi}$ and $W=\frac{\phi_t+\phi_s}{\phi}$.
\end{theorem}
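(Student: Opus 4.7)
The plan is to prove Theorem \ref{wkeq} by a direct algebraic reduction: rewrite every ingredient of the equation (\ref{Kaeq}) in terms of $U$, $W$ and their first derivatives, and then verify that the sum collapses to (\ref{nKaeq}). No new geometric input is needed; the content is purely the change of variables.

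The first step is to record the key substitution identities. From the definition $U=[s\phi+s(t-s)\phi_s]/\phi$ we read off
\[
\phi+(t-s)\phi_s=\tfrac{\phi U}{s},\qquad \phi_s=\tfrac{\phi(U-s)}{s(t-s)},
\]
and hence
\[
\phi-s\phi_s=-\tfrac{\phi(U-t)}{t-s}.
\]
Thus the ``elliptic'' prefactor in (\ref{Kaeq}) becomes
\[
(\phi-s\phi_s)\bigl[\phi+(t-s)\phi_s\bigr]=-\tfrac{\phi^2 U(U-t)}{s(t-s)}.
\]
Differentiating the identity $\phi+(t-s)\phi_s=\phi U/s$ in $s$ gives the clean formula
\[
s(t-s)\phi_{ss}=\tfrac{\phi U(U-t)}{s(t-s)}+\phi U_s.
\]
Similarly, from $\phi_t+\phi_s=\phi W$ we obtain $\phi_t=\phi W-\phi_s$ and, by differentiation in $s$,
\[
\phi_{st}+\phi_{ss}=\phi_s W+\phi W_s.
\]

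With these in hand, both terms on the left of (\ref{Kaeq}) can be written as $\phi^3$ times rational expressions in $U$, $W$, $U_s$, $W_s$. Specifically, combining the identities above produces
\[
\phi_s-\phi_t+s(\phi_{st}+\phi_{ss})=\phi\Bigl[\tfrac{(U-s)(2+sW)}{s(t-s)}-W+sW_s\Bigr]
\]
and
\[
\phi(\phi_s-\phi_t)+s\phi_s(\phi_t+\phi_s)=\phi^2\Bigl[\tfrac{(U-s)(2+sW)}{s(t-s)}-W\Bigr].
\]
Substituting, the equation (\ref{Kaeq}) takes the form
\[
-\tfrac{U(U-t)}{s(t-s)}\Bigl[\tfrac{(U-s)(2+sW)}{s(t-s)}-W+sW_s\Bigr]
+\Bigl[\tfrac{U(U-t)}{s(t-s)}+U_s\Bigr]\Bigl[\tfrac{(U-s)(2+sW)}{s(t-s)}-W\Bigr]=0
\]
after dividing by $\phi^3$ (which is nonzero by strong pseudo-convexity).

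The final step is bookkeeping: the two copies of $\tfrac{U(U-t)}{s(t-s)}\cdot\tfrac{(U-s)(2+sW)}{s(t-s)}$ cancel, as do the two copies of $\tfrac{U(U-t)W}{s(t-s)}$. What remains is
\[
-\tfrac{U(U-t)W_s}{t-s}+\tfrac{U_s(U-s)(2+sW)}{s(t-s)}-U_s W=0,
\]
and multiplying through by $s(t-s)$ and grouping the $U_sW$ contributions via $(U-s)-(t-s)=U-t$ yields exactly (\ref{nKaeq}). The ``obstacle'' here is not conceptual but purely combinatorial: one must be careful to keep the signs of $U-t$ versus $t-U$ consistent, and to reduce $(U-s)-(t-s)$ at the right moment so that the $W$-proportional pieces combine rather than expand. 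Since every step is reversible, the equivalence of (\ref{Kaeq}) and (\ref{nKaeq}) follows.
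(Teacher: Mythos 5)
Your proposal is correct and follows essentially the same route as the paper: both express $\phi_s$, $\phi_t$, $\phi_{ss}$, $\phi_{st}+\phi_{ss}$ and the two bracketed factors of (\ref{Kaeq}) in terms of $U$, $W$, $U_s$, $W_s$, divide out $\phi^3$, and observe that the remaining terms collapse to (\ref{nKaeq}). Your intermediate expressions agree with the paper's (e.g.\ your $\tfrac{(U-s)(2+sW)}{s(t-s)}-W$ equals the paper's $\tfrac{sW(U-t)+2(U-s)}{s(t-s)}$), and you merely spell out the final cancellation that the paper leaves implicit.
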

\begin{proof}
Let 
$$U:=\frac{s\phi+s(t-s)\phi_s}{\phi}, \quad W:=\frac{\phi_t+\phi_s}{\phi}.$$
Then we have
 \begin{equation} \phi_s=\frac{U-s}{s(t-s)}\phi, \quad \phi_t=(W-\frac{U-s}{s(t-s)})\phi.\label{pspt} \end{equation}
It is easy to see
\[\phi-s\phi_s=\frac{t-U}{t-s}\phi, \quad \phi+(t-s)\phi_s=\frac{U}{s}\phi.\]
Obviously, $\phi_s-\phi_t=-W\phi+2\frac{U-s}{s(t-s)}\phi$ and $\phi_t+\phi_s=W\phi$, so that 
\[\phi(\phi_s-\phi_t)+s\phi_s(\phi_t+\phi_s)=\frac{sW(U-t)+2(U-s)}{s(t-s)}\phi^2.\]
By using $\phi_{st}+\phi_{ss}=(\phi_t+\phi_s)_s=W_s\phi+W\phi_s$, we have
\[\phi_s-\phi_t+s(\phi_{st}+\phi_{ss})=s W_s\phi+\frac{U-t}{t-s}W\phi+\frac{2(U-s)}{s(t-s)}\phi.\]
Moreover, from the first equation of (\ref{pspt}), we compute \[\phi_{ss}=\frac{s(t-s)U_s+U(U-t)}{s^2(t-s)^2}\phi.\]
Substituting  the above equalities into the weakly K\"ahler equation (\ref{Kaeq}) implies the equation ({\ref{nKaeq}}).   

\end{proof}

\begin{lemma} \label{lem0}
$U$ and $W$ in Theorem \ref{wkeq} satisfy the following integral  equation
\[s(U_t+U_s)=s^2(t-s)W_s+U.\]
\end{lemma}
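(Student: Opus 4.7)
The identity to be shown is a purely formal consequence of the \emph{definitions}
\[
U=\frac{s\phi+s(t-s)\phi_s}{\phi},\qquad W=\frac{\phi_t+\phi_s}{\phi},
\]
so no use of the weakly K\"ahler equation \eqref{nKaeq} is needed. The plan is to compute $U_t+U_s$ directly from this definition, then replace every occurrence of $\phi_s/\phi$, $(\phi_s+\phi_t)/\phi$ and $(\phi_{ss}+\phi_{st})/\phi$ by the expressions in $U$, $W$, $W_s$ that were already recorded in the proof of Theorem~\ref{wkeq}.

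First, I would write $U=s+s(t-s)\phi_s/\phi$ and differentiate. A straightforward calculation gives
\[
U_s+U_t \;=\; 1+\frac{(t-s)\phi_s}{\phi}+\frac{s(t-s)(\phi_{ss}+\phi_{st})}{\phi}-\frac{s(t-s)\phi_s(\phi_s+\phi_t)}{\phi^{2}},
\]
where the cancellation $(t-2s)\phi_s+s\phi_s=(t-s)\phi_s$ takes care of the non-mixed first-order terms. This is the only real computation, and it is routine.

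Next, I would substitute the three identities used in the proof of Theorem~\ref{wkeq}, namely
\[
\frac{\phi_s}{\phi}=\frac{U-s}{s(t-s)},\qquad \frac{\phi_s+\phi_t}{\phi}=W,\qquad \frac{\phi_{ss}+\phi_{st}}{\phi}=W_s+W\cdot\frac{\phi_s}{\phi}.
\]
Plugging these into the previous display, the middle term becomes $s(t-s)W_s+W(U-s)$ and the last term becomes $W(U-s)$; these cancel. What remains is $1+(U-s)/s+s(t-s)W_s=U/s+s(t-s)W_s$. Multiplying by $s$ yields the claimed identity $s(U_t+U_s)=s^{2}(t-s)W_s+U$.

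The only mild obstacle is bookkeeping: one must keep track of which terms involve the second derivatives $\phi_{ss},\phi_{st}$ versus the square $\phi_s^2$, since both produce contributions proportional to $W(U-s)$ that must cancel on the nose. Once the three substitutions above are applied in the correct order, no further algebra is needed.
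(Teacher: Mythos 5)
Your computation is correct, and it is in substance the same argument as the paper's one-line proof: the identity is exactly the integrability condition $\phi_{st}=\phi_{ts}$ for the system $\phi_s=\frac{U-s}{s(t-s)}\phi$, $\phi_t=\bigl(W-\frac{U-s}{s(t-s)}\bigr)\phi$, and your direct differentiation of $U=s+s(t-s)\phi_s/\phi$ followed by the three substitutions is just an explicit write-up of that check. No gaps; the algebra verifies.
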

\begin{proof}
From the formula of $U$ and $W$, we can see that
\[\phi_s=\frac{U-s}{s(t-s)}\phi, \quad \phi_t=(W-\frac{U-s}{s(t-s)})\phi.\]
Then $\phi_{st}=\phi_{ts}$ implies the result. 
\end{proof}

\begin{theorem} \label{hct1}
Let  $F=\sqrt{r\phi(t,s)}$ be the unitary invariant complex Finsler metric defined on a domain $D\subset \mathbb{C}^n$ and $K_F$ be the holomorphic curvature. Then in terms of $\phi$, $K_F$ is
\begin{eqnarray*}
K_F(v)&=&-\frac{2}{\phi}\Big\{[s(\frac{\partial k_2}{\partial t}+\frac{\partial k_2}{\partial s})+k_2]+\frac{s\phi+s(t-s)\phi_s}{\phi}[s(\frac{\partial k_3}{\partial t}+\frac{\partial k_3}{\partial s})+2k_3]\Big\},
\end{eqnarray*}
where 
\begin{eqnarray*}
k_1&:=&(\phi-s\phi_s)[\phi+(t-s)\phi_s]+s(t-s)\phi\phi_{ss},\\
k_2&:=&\frac{1}{k_1}\{[\phi+(t-s)\phi_s+s(t-s)\phi_{ss}](\phi_t+\phi_s)-s[\phi+(t-s)\phi_s](\phi_{st}+\phi_{ss})\},\\
k_3&:=&\frac{1}{k_1}[\phi(\phi_{st}+\phi_{ss})-\phi_s(\phi_t+\phi_s)].
\end{eqnarray*}
\end{theorem}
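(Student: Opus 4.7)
The plan is to evaluate the curvature formula
\[K_F(v) = -\frac{2}{G^2}\, G_\alpha\, \delta_{\bar\gamma}(N^\alpha_\beta)\, v^\beta\, \bar v^\gamma\]
by first computing $N^\alpha_\beta v^\beta$ in closed form for the unitary invariant metric $F=\sqrt{r\phi(t,s)}$, then applying $\delta_{\bar\gamma}$ and contracting with $G_\alpha\bar v^\gamma$. The feature that makes this tractable is $U(n)$-equivariance: both $G^{\alpha\bar\beta}$ and $N^\alpha_\beta v^\beta$ are forced to be linear combinations of a small number of $U(n)$-equivariant tensors (for example $\delta^{\alpha\bar\beta}$, $z^\alpha\bar z^\beta$, $\bar v^\alpha v^\beta$ and the mixed cross terms for the former; $v^\alpha$ and $\bar z^\alpha$ for the latter) with scalar coefficients depending only on $t$ and $s$. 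This reduces the entire computation to bookkeeping of finitely many scalar functions.

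First I would invert $G_{\alpha\bar\beta}$. Since the matrix given in the preliminaries is a multiple of the identity plus two explicit rank-one corrections ($r\phi_{ss} s_\alpha s_{\bar\beta}$ and $\phi_s \bar z^\alpha z^\beta$), two applications of the Sherman--Morrison formula give $G^{\alpha\bar\gamma}$ in closed form with common denominator $k_1 = (\phi-s\phi_s)[\phi+(t-s)\phi_s]+s(t-s)\phi\phi_{ss}$; this is the same factor that governs $\det(G_{\alpha\bar\beta})$. Next I would compute $G_{\bar\gamma;\beta} = \partial G_{\bar\gamma}/\partial z^\beta$ directly from $G = r\phi(t,s)$, assemble $N^\alpha_\beta = G^{\alpha\bar\gamma} G_{\bar\gamma;\beta}$, and contract with $v^\beta$. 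Simplification should produce an identity of the form
\[N^\alpha_\beta v^\beta = k_3\, \overline{\langle z,v\rangle}\, v^\alpha + k_2\, \overline{\langle z,v\rangle}\, \bar z^\alpha\]
(up to a conjugation convention), with $k_2, k_3$ matching the expressions in the statement. Isolating these precise combinations, with $k_1$ as common denominator, is the heart of the argument.

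With $N^\alpha_\beta v^\beta$ in this form, applying $\delta_{\bar\gamma}$ and contracting with $\bar v^\gamma$ is a chain-rule computation. The $\bar z^\gamma$-derivatives act on $(t,s)$ through $\partial t/\partial \bar z^\gamma = z^\gamma$ and $\partial s/\partial \bar z^\gamma \propto v^\gamma$, so after the identities $z^\gamma \bar v^\gamma = \overline{\langle z,v\rangle}$ and $|\langle z,v\rangle|^2 = rs$ are used the differential operator $s(\partial_t + \partial_s)$ appears naturally, while $\delta_{\bar\gamma}$ hitting the explicit $\bar z^\alpha$ and $\overline{\langle z,v\rangle}$ factors supplies the bare ``$+k_2$'' and ``$+2k_3$'' terms. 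Finally, contracting with $G_\alpha$ using $G_\alpha v^\alpha = r\phi$ and $G_\alpha \bar z^\alpha = [\phi+(t-s)\phi_s]\overline{\langle z,v\rangle}$ produces the coefficient $U=[s\phi+s(t-s)\phi_s]/\phi$ in front of the $k_3$-block, and the overall prefactor $-2/\phi$ results once the $r$'s from $G^2=r^2\phi^2$ cancel against those collected from the contractions. The main obstacle is not conceptual but algebraic: the dozens of intermediate terms produced by the Sherman--Morrison inversion and the $N^\alpha_\beta v^\beta$ contraction must be shown to collapse precisely into the two functions $k_2, k_3$, which requires using the definition of $k_1$ as the common denominator and carefully collecting the residual numerators stage by stage.
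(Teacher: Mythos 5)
Your route is essentially the paper's: put the spray coefficients in the equivariant form $N^\gamma_\beta v^\beta=k_2\overline{\langle z,v\rangle}\,v^\gamma+k_3\overline{\langle z,v\rangle}^{\,2}z^\gamma$, then differentiate along $\delta_{\bar\gamma}$ and contract with $G_\gamma\bar v^\gamma$. (The paper simply quotes the spray coefficients from Zhong's work rather than re-deriving them; your Sherman--Morrison inversion of $G_{\alpha\bar\beta}$ with $k_1$ as common denominator is a legitimate way to obtain them, just more laborious.) One bookkeeping warning: your ansatz $k_3\overline{\langle z,v\rangle}v^\alpha+k_2\overline{\langle z,v\rangle}\bar z^\alpha$ has the labels $k_2,k_3$ interchanged relative to the statement and puts $\bar z^\alpha$ with a single factor of $\overline{\langle z,v\rangle}$ where holomorphy and $(2,0)$-homogeneity in $v$ force $\overline{\langle z,v\rangle}^{\,2}z^\alpha$; carried through literally this attaches the factor $U=[s\phi+s(t-s)\phi_s]/\phi$ to the wrong block and loses the ``$+2k_3$'' term, which comes precisely from differentiating the square.

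The substantive gap is that you treat $\delta_{\bar\gamma}$ as if it were $\partial/\partial\bar z^\gamma$. By definition $\delta_{\bar\gamma}=\partial/\partial\bar z^\gamma-\bar N^\beta_\gamma\,\partial/\partial\bar v^\beta$, and since $k_2,k_3$ depend on $s$, which is not antiholomorphic in $v$, the vertical correction does not vanish term by term. The paper computes it explicitly as $\frac{2}{\phi}s^2(t-s)k_3\bigl[\frac{\partial k_2}{\partial s}+U\frac{\partial k_3}{\partial s}\bigr]$ and shows it drops out only because of the identity $\frac{\partial k_2}{\partial s}+U\frac{\partial k_3}{\partial s}=0$, verified by direct computation from the definitions of $k_1,k_2,k_3$ (equivalently, it follows from $k_2+Uk_3=W$ together with $k_3=W_s/U_s$ in the notation of the weakly K\"ahler section). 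Your chain-rule computation therefore lands on the right-hand side of the theorem without having shown that it equals $K_F$; you need to add either the explicit evaluation and cancellation of the $\bar N^\beta_\gamma\,\partial/\partial\bar v^\beta$ term or an a priori reason it contributes nothing.
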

\begin{proof}
According to the definition of the holomorphic curvature, we have
\begin{eqnarray} \label{hceq}
K_F&=&-\frac{2}{G^2}G_{\alpha}\delta_{\bar{\gamma}}(N^\alpha_{\beta})v^\beta\bar{v}^{\gamma} \nonumber\\
&=&-\frac{2}{G^2}G_{\gamma}\delta_{\bar{\nu}}(2\mathbb{G}^\gamma)\bar{v}^{\nu} \nonumber \\
   &=&-\frac{2}{G^2}G_{\gamma}\frac{\partial }{\partial \bar{z}^{\nu}}(2\mathbb{G}^\gamma)\bar{v}^{\nu}+\frac{2}{G^2}G_{\gamma}\bar{N}^\alpha_\nu\frac{\partial }{\partial \bar{v}^\alpha}(2\mathbb{G}^\gamma)\bar{v}^{\nu}
\end{eqnarray}
where $\mathbb{G}^\gamma:=\frac{1}{2}N^\gamma_{\beta}v^\beta$.
For the unitary invariant complex Finsler metric $F=\sqrt{r\phi(t,s)}$, the complex spray coefficients are \cite{Zhong}
 $$2\mathbb{G}^\gamma=N^\gamma_\beta v^\beta=k_2\overline{\langle z, v\rangle}v^\gamma+k_3\overline{\langle z, v\rangle}^2z^\gamma.$$
 Therefore one needs to calculate
 \begin{eqnarray*}
 \frac{\partial}{\partial {\bar z^\nu}}(2\mathbb{G}^\gamma)&=&\frac{\partial k_2}{\partial t}\overline{\langle z, v\rangle}z^\nu v^\gamma+\frac{\partial k_2}{\partial s}sv^\nu v^\gamma+k_2v^\nu v^\gamma \\
 &&+\frac{\partial k_3}{\partial t}\overline{\langle z, v\rangle}^2z^\nu z^\gamma +\frac{\partial k_3}{\partial s}s\overline{\langle z, v\rangle}v^\nu z^\gamma    +2k_3\overline{\langle z, v\rangle}v^\nu z^\gamma
 \end{eqnarray*}
 and 
 \begin{eqnarray*}
 \frac{\partial}{\partial {\bar z^\nu}}(2\mathbb{G}^\gamma)\bar{v}^\nu=r[s(\frac{\partial k_2}{\partial t}+\frac{\partial k_2}{\partial s})+k_2]v^\gamma+r[s(\frac{\partial k_3}{\partial t}+\frac{\partial k_3}{\partial s})+2k_3]\overline{\langle z, v\rangle}z^\gamma.
 \end{eqnarray*}
 Since $G=F^2=r\phi(t,s)$, we have $G_\gamma=\bar{v}^\gamma\phi+r\phi_s s_\gamma$, where $s_\gamma:=\frac{\partial s}{\partial v^\gamma}=-r^{-2}\bar{v}^\gamma |\langle z, v\rangle|^2+r^{-1}\langle z, v\rangle\bar{z}^\gamma$. Obviously, $s_\gamma v^\gamma=0$ and $s_\gamma z^\gamma=r^{-1}\langle z, v\rangle (t-s)$. So we have
\begin{eqnarray}\label{hc1}-\frac{2}{G^2}G_{\gamma}\frac{\partial }{\partial \bar{z}^{\nu}}(2\mathbb{G}^\gamma)\bar{v}^{\nu}&=&-\frac{2}{r^2\phi^2}(\bar{v}^\gamma\phi+r\phi_s s_\gamma)\Big\{r[s(\frac{\partial k_2}{\partial t}+\frac{\partial k_2}{\partial s})+k_2]v^\gamma   \nonumber\\ 
&&+r[s(\frac{\partial k_3}{\partial t}+\frac{\partial k_3}{\partial s})+2k_3]\overline{\langle z, v\rangle}z^\gamma\Big\}  \nonumber \\
&=&-\frac{2}{\phi}\Big\{[s(\frac{\partial k_2}{\partial t}+\frac{\partial k_2}{\partial s})+k_2] \nonumber\\
        &&+\frac{s\phi+s(t-s)\phi_s}{\phi}[s(\frac{\partial k_3}{\partial t}+\frac{\partial k_3}{\partial s})+2k_3]\Big\}.
\end{eqnarray}

On the other hand, it is necessary to compute
\begin{eqnarray*}
\frac{\partial }{\partial \bar{v}^\alpha}(2\mathbb{G}^\gamma)&=&\frac{\partial }{\partial \bar{v}^\alpha}(k_2\overline{\langle z, v\rangle}v^\gamma+k_3\overline{\langle z, v\rangle}^2z^\gamma)\\
&=&\frac{\partial k_2}{\partial s}s_{\bar{\alpha}}\overline{\langle z, v\rangle}v^\gamma+\frac{\partial k_3}{\partial s}s_{\bar{\alpha}}\overline{\langle z, v\rangle}^2z^\gamma
\end{eqnarray*}
and 
\begin{eqnarray*}
\bar{N}^\alpha_\nu\frac{\partial }{\partial \bar{v}^\alpha}(2\mathbb{G}^\gamma)\bar{v}^{\nu}&=&2\bar{\mathbb{G}}^\alpha\frac{\partial }{\partial \bar{v}^\alpha}(2\mathbb{G}^\gamma)\\
&=&2(k_2\langle  z,v\rangle \bar{v}^\alpha+k_3\langle  z,v\rangle^2\bar{z}^\alpha)(\frac{\partial k_2}{\partial s}s_{\bar{\alpha}}\overline{\langle z, v\rangle}v^\gamma+\frac{\partial k_3}{\partial s}s_{\bar{\alpha}}\overline{\langle z, v\rangle}^2z^\gamma)\\
&=&rs^2(t-s)k_3[\frac{\partial k_2}{\partial s}v^\gamma+\frac{\partial k_3}{\partial s}\overline{\langle z, v\rangle}z^\gamma].
\end{eqnarray*}
Thus we obtain
\begin{eqnarray} \label{hc2}
\frac{2}{G^2}G_{\gamma}\bar{N}^\alpha_\nu\frac{\partial }{\partial \bar{v}^\alpha}(2\mathbb{G}^\gamma)\bar{v}^{\nu}&=&\frac{2}{r^2\phi^2}(\bar{v}^\gamma\phi+r\phi_s s_\gamma)rs^2(t-s)k_3[\frac{\partial k_2}{\partial s}v^\gamma+\frac{\partial k_3}{\partial s}\overline{\langle z, v\rangle}z^\gamma] \nonumber\\
&=&\frac{2}{\phi}s^2(t-s)k_3[\frac{\partial k_2}{\partial s}+\frac{s\phi+s(t-s)\phi_s}{\phi}\frac{\partial k_3}{\partial s}].
\end{eqnarray}
Plugging (\ref{hc1}) and (\ref{hc2}) into (\ref{hceq}) will yield 
\begin{eqnarray*}
K_F(v)&=&-\frac{2}{\phi}\Big\{[s(\frac{\partial k_2}{\partial t}+\frac{\partial k_2}{\partial s})+k_2]+\frac{s\phi+s(t-s)\phi_s}{\phi}[s(\frac{\partial k_3}{\partial t}+\frac{\partial k_3}{\partial s})+2k_3]\\
&&-s^2(t-s)k_3[\frac{\partial k_2}{\partial s}+\frac{s\phi+s(t-s)\phi_s}{\phi}\frac{\partial k_3}{\partial s}]\Big\}.
\end{eqnarray*}
By a direct computation, we notice that
\[\frac{\partial k_2}{\partial s}+\frac{s\phi+s(t-s)\phi_s}{\phi}\frac{\partial k_3}{\partial s}=0.\]
So the the formula of $K_F(v)$ holds. 

\end{proof}

\begin{remark} In \cite{WXZ}, Wang, Xia and Zhong also obtained the formula of the holomorphic curvature of the unitary invariant complex Finsler metrics. 

\end{remark}

When the metric is weakly K\"ahler, the holomorphic curvature has a nice formula in terms of $U$ and $W$.
 
\begin{theorem} Let $F$ be an unitary invariant metric defined on a domain $D\subset \mathbb{C}^n$. If $F=\sqrt{r\phi(t,s)}$ is a weakly K\"ahler Finsler metric, the holomorphic curvature $K_F$ is given by
\[K_F=-\frac{2}{\phi}\{s(W_t+W_s)-s^2(t-s)\frac{W_s^2}{U_s}+W\}\]
where $U=\frac{s\phi+s(t-s)\phi_s}{\phi}$ and $W=\frac{\phi_t+\phi_s}{\phi}$.
\begin{proof}
From 
$$U=\frac{s\phi+s(t-s)\phi_s}{\phi}, \quad W=\frac{\phi_t+\phi_s}{\phi},$$
one can obtain 
\[\phi_s=\frac{U-s}{s(t-s)}\phi, \quad \phi_t=(W-\frac{U-s}{s(t-s)})\phi.\]
Substituting above equalities into the formulas of $k_1$, $k_2$ and $k_3$ in Theorem \ref{hct1} derives
\[k_1=U_s\phi^2,\quad  k_2=\frac{WU_s-UW_s}{U_s},\quad k_3=\frac{W_s}{U_s}.\]

When imposing the weakly K\"ahler condition i.e.
   \[sU(U-t)W_s-s(U-t)U_sW-2(U-s)U_s=0,\]
   one can simplify $k_2$ and $k_3$ to get
   \[k_2=-\frac{2(U-s)}{s(U-t)},\quad k_3=\frac{W-k_2}{U}=\frac{W}{U}+\frac{2(U-s)}{sU(U-t)}.\] 
 Thus
 \[\frac{\partial k_2}{\partial t}=\frac{2(t-s)U_t-2(U-s)}{s(U-t)^2},\quad \frac{\partial k_2}{\partial s}=\frac{2s(t-s)U_s+2U(U-t)}{s^2(U-t)^2},\]
 \[\frac{\partial k_3}{\partial t}=(\frac{W}{U})_t+\frac{U_t}{U^2}k_2-\frac{1}{U}\frac{\partial k_2}{\partial t},\quad \frac{\partial k_3}{\partial s}=(\frac{W}{U})_s+\frac{U_s}{U^2}k_2-\frac{1}{U}\frac{\partial k_2}{\partial s}.\]

 According to the formula of the holomorphic curvature $K_F$ in Theorem \ref{hct1}, the above formulas involving $k_2$ and $k_3$ will imply
 \[K_F=-\frac{2}{\phi}\Big\{s(W_t+W_s)-\frac{sW(U-t)+2(U-s)}{U(U-t)}(U_t+U_s)+\frac{2sW(U-t)+2(U-s)}{s(U-t)}\Big\}.\]
 From the integral equation of $U$ and $W$ in Lemma \ref{lem0}, we have
   \[U_t+U_s=s(t-s)W_s+\frac{U}{s}.\]
Therefore the holomorphic curvature $K_F$ can be simplified as
\[K_F=-\frac{2}{\phi}\Big\{s(W_t+W_s)-\frac{s^2(t-s)WW_s}{U}-s(t-s)\frac{2(U-s)}{U(U-t)}W_s+W\Big\}.\]
Notice that the weakly K\"ahler condition tells us
\[\frac{2(U-s)}{U(U-t)}=\frac{sW_s}{U_s}-\frac{sW}{U}.\]
It leads to
\[K_F=-\frac{2}{\phi}\{s(W_t+W_s)-s^2(t-s)\frac{W_s^2}{U_s}+W\},\]
as we want in the Theorem.

\end{proof}

\end{theorem}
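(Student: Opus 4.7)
The plan is to start from the general formula for $K_F$ in Theorem \ref{hct1}, which expresses the holomorphic curvature in terms of $k_1,k_2,k_3$ and their partials, and then translate everything into $U$ and $W$ using the identities already collected in the proof of Theorem \ref{wkeq}. Namely, from the definitions of $U$ and $W$ one has $\phi_s=\frac{U-s}{s(t-s)}\phi$, $\phi_t=(W-\frac{U-s}{s(t-s)})\phi$, $\phi-s\phi_s=\frac{t-U}{t-s}\phi$, $\phi+(t-s)\phi_s=\frac{U}{s}\phi$, together with the explicit formula for $\phi_{ss}$. Substituting these into the definitions of $k_1,k_2,k_3$ should collapse them to
\[k_1=U_s\phi^2,\qquad k_2=\frac{WU_s-UW_s}{U_s},\qquad k_3=\frac{W_s}{U_s},\]
reducing the problem to manipulations in $U,W$ and their derivatives alone.

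Next I would invoke the weakly K\"ahler equation $sU(U-t)W_s-s(U-t)U_sW-2(U-s)U_s=0$ from Theorem \ref{wkeq} to rewrite the ratio $W_s/U_s$. This should simplify $k_2$ to $-\frac{2(U-s)}{s(U-t)}$ and, via the visible relation $k_2+Uk_3=W$, give $k_3=\frac{W}{U}+\frac{2(U-s)}{sU(U-t)}$. With these compact forms I can differentiate directly to obtain $\partial_t k_2,\partial_s k_2,\partial_t k_3,\partial_s k_3$, and I expect an identity of the shape $\partial_s k_2+U\,\partial_s k_3=0$ to drop out of $k_2+Uk_3=W$ combined with $\partial_s U=U_s$; this would be the mechanism that eliminates the last of the $(t-s)k_3$-type terms in the Theorem \ref{hct1} expression.

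The final step is to plug these simplifications into Theorem \ref{hct1} and use Lemma \ref{lem0} in the form $U_t+U_s=s(t-s)W_s+U/s$ to convert occurrences of $U_t+U_s$ into expressions purely in $W_s$ and $U$. Re-using the weakly K\"ahler condition once more, now written as $\frac{2(U-s)}{U(U-t)}=\frac{sW_s}{U_s}-\frac{sW}{U}$, should cause the $(U-s)$-bearing rational factors to cancel against the $U_t+U_s$ contribution, leaving exactly
\[K_F=-\frac{2}{\phi}\Big\{s(W_t+W_s)-s^2(t-s)\frac{W_s^2}{U_s}+W\Big\}.\]

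The main obstacle I anticipate is not conceptual but organizational: the intermediate expressions are bulky rational functions in $U,W,U_s,W_s,U_t,W_t$, and the final simplification hinges on deploying two nontrivial identities (the weakly K\"ahler equation and Lemma \ref{lem0}) at precisely the right points so that the $(U-s)$ and $(U-t)$ denominators cancel simultaneously. The challenge is to sequence the substitutions so that each cancellation is visible, rather than letting the algebra balloon before applying the key identities.
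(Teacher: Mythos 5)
Your proposal follows essentially the same route as the paper: translate $k_1,k_2,k_3$ from Theorem \ref{hct1} into $U$ and $W$ (obtaining $k_1=U_s\phi^2$, $k_2=\frac{WU_s-UW_s}{U_s}$, $k_3=\frac{W_s}{U_s}$), simplify $k_2,k_3$ via the weakly K\"ahler equation, differentiate, substitute $U_t+U_s$ using Lemma \ref{lem0}, and apply the weakly K\"ahler condition once more to reach the stated formula. The only (harmless) deviation is your re-derivation of $\partial_s k_2+U\,\partial_s k_3=0$ from $k_2+Uk_3=W$ --- an identity the paper already absorbed into the statement of Theorem \ref{hct1} --- which is a cleaner justification than the paper's ``direct computation'' but not a different argument.
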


\section{Weakly K\"ahler unitary invariant complex Finsler metrics with Randers type and an uniformization theorem}

In 2009, Aldea and Munteanu initiated the study of the complex Randers motivated by the notion of the real Randers metrics \cite{AM}. Later, Chen and Shen gave the formula of the holomorphic curvature for complex Randers metrics and proved some rigidity theorems on complex Randers metrics \cite{CS1}.

\begin{definition}
  A complex Finsler metric $F$ on a complex manifold is called a complex Randers metric if $F$ can be written as $F=\alpha+|\beta|$, where $\alpha$ is a Hermitian metric and $\beta$ is a $(1,0)$-form, i.e. $\alpha=\sqrt{a_{i \bar{j}}(z) v^{i} \bar{v}^{j}}$, $\beta=b_{i}(z) v^{i}$.
\end{definition}

\begin{remark}(1) The complex Randers metric is not smooth along the directions $v=v^i\frac{\partial}{\partial z^i}$ with $b_iv^i=0$.

(2) According to above definition,  it is easy to see that an unitary invariant Finsler metric $F=\sqrt{r\phi(t,s)}$ is complex Randes metric if and only if 
\[\phi=\big(\sqrt{f(t)+g(t)s}+\sqrt{h(t)s} \big)^2\]
 where $f(t), g(t), h(t)$ are smooth functions and $f(t)>0, h(t)\geq0$.
\end{remark}

\begin{theorem} \label{wkr}An unitary invariant Randers metric $F=\sqrt{r\phi(t,s)}$ defined on a domain $D \subset \mathbb{C}^n$ is a weakly K\"ahler Finsler metric if and only if
\[\phi=\Big (\sqrt{f(t)+\frac{tf^{'}(t)-f(t)}{2t}s}+\sqrt{\frac{tf^{'}(t)+f(t)}{2t}s}\Big )^2\]
or
\[\phi=f(t)+f^\prime(t)s\]
where $f(t)$ is a positive smooth function.   

\end{theorem}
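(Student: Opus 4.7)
The plan is to substitute the unitary invariant Randers ansatz $\phi=(A+B)^2$, where $A=\sqrt{f(t)+g(t)s}$ and $B=\sqrt{h(t)s}$ with $f>0$ and $h\ge 0$, into the weakly K\"ahler equation (\ref{Kaeq}) (equivalently, into its $U,W$-form (\ref{nKaeq}) from Theorem \ref{wkeq}), and determine which triples $(f,g,h)$ satisfy the resulting identity. The auxiliary variables $A,B$ satisfy $A^2=f+gs$ and $B^2=hs$, and each of $\phi_s,\phi_t,\phi_{ss},\phi_{st}$ becomes rational in $A,B$ with polynomial numerators in $s,t$ and in the coefficient functions $f,g,h$ and their $t$-derivatives; the only irrationality in $s$ enters through the single combination $AB=\sqrt{hs(f+gs)}$.

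First I would split off the degenerate case $h\equiv 0$. Then $\phi=f(t)+g(t)s$, so $F$ is a unitary invariant Hermitian metric. By Zhong's result (Remark 1.1(2)), such a metric is weakly K\"ahler if and only if it is Hermitian-K\"ahler, forcing $g=f'$; this yields the second family in the statement. In the generic case $h>0$, after expanding (\ref{Kaeq}) and reducing modulo the two quadratic relations $A^2-(f+gs)=0$ and $B^2-hs=0$, the equation takes the form
\[
P(s,t)+Q(s,t)\,AB=0,
\]
where $P$ and $Q$ are polynomials in $s$ whose coefficients involve $f,g,h,f',g',h'$. Since $hs(f+gs)$ is not a perfect square in $s$ when $h>0$ and $f>0$, the factor $AB$ is linearly independent over the field of rational functions of $s$, so necessarily $P\equiv 0$ and $Q\equiv 0$ separately. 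Matching coefficients of the $s$-powers in these two identities yields a coupled ODE system on $f,g,h$; I expect this system to reduce to the two algebraic relations
\[
g+h=f',\qquad h-g=\frac{f}{t},
\]
equivalent to $g=(tf'-f)/(2t)$ and $h=(tf'+f)/(2t)$, recovering the first family. A direct back-substitution settles the converse direction: with these $g,h$ the Randers ansatz satisfies (\ref{Kaeq}) for every positive smooth $f$.

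The main obstacle is the algebraic bookkeeping in the expansion step: the weakly K\"ahler equation (\ref{Kaeq}) is a product of four expressions each nonlinear in $A,B$, and controlling all the $s$-powers during the reduction is delicate. Working through the $U,W$-form (\ref{nKaeq}) should shorten the calculation, because $U=s+s(t-s)(\phi_s/\phi)$ and $W=\phi_t/\phi+\phi_s/\phi$ can each be placed over the common denominator $AB(A+B)$, which makes the rational/irrational split transparent; the mixed derivatives $U_s,U_t,W_s$ then inherit the same clean structure, so that the conclusion of the $P=Q=0$ split reduces to matching only a small number of polynomial coefficients in $s$ and extracting the relations $g+h=f'$ and $h-g=f/t$.
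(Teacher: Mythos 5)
Your plan is essentially the paper's own proof: both substitute the Randers ansatz into the weakly K\"ahler equation in its $U,W$-form, split the result into the rational part and the part carrying the irrational factor $\sqrt{s}\sqrt{f+gs}$, set all coefficients to zero, and extract exactly the relations $g=(tf'-f)/(2t)$ and $h=(tf'+f)/(2t)$ (the paper does this in two stages, first pinning down $g$ from the leading coefficient and then $h$, rather than reading off both at once). The only detail you gloss over is the degenerate subcase $tf'+f=0$ arising in the coefficient analysis, which the paper discards because it makes the Hermitian part of the Randers metric fail to be positive definite.
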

\begin{proof} 
The sufficiency is obvious by a direct computation. 

Now we prove the necessity.
Since $F=\sqrt{r\phi(t,s)}$ is a weakly K\"ahler complex Randers metric,   $\phi$ can be written as 
  $$\phi=\big(\sqrt{f(t)+g(t)s}+\sqrt{h(t)s} \big)^2$$
 where $f(t), g(t), h(t)$ are smooth functions and $f(t)>0, h(t)\geq 0$.
 Therefore
 \[U=\frac{s\phi+s(t-s)\phi_s}{\phi}=\frac{s(f+tg)+t\sqrt{s(f+gs)h}}{\sqrt{f+gs}(\sqrt{f+gs}+\sqrt{hs})},\]
 \[W=\frac{\phi_t+\phi_s}{\phi}=\frac{(h+sh^\prime)\sqrt{f+gs}+(f^\prime+g+sg^\prime)\sqrt{sh}}{(\sqrt{f+gs}+\sqrt{hs})\sqrt{s(f+gs)h}}.\]
Substituting above equalities into the equation of weakly K\"ahler condition (\ref{nKaeq}), one will get a quite complex equation as following
\[\{A_0(t)+A_1(t)\sqrt{f+gs}\sqrt{s}+A_2(t)(\sqrt{s})^2+A_3(t)\sqrt{f+gs}(\sqrt{s})^3+A_4(t)(\sqrt{s})^4\}f(t)=0\]
where \[A_0(t)=tfh(tf^\prime-f-2tg).\]
Notice that $s=\frac{|\langle z,v \rangle|^2 }{|v|^2}$ can be an arbitrary real number,  thus
   \[A_0(t)=A_1(t)=A_2(t)=A_3(t)=A_4(t)=0.\]
From $A_0(t)=0$, we know that one of the following three equations holds:
\[h(t)=0,\quad f(t)=0\quad \text{or} \quad g(t)=\frac{tf^\prime(t)-f(t)}{2t}.\]

If $h(t)=0$, then the complex Randers metric $F$  reduces to the Hermitian metric and then must be K\"ahler. It can imply 
\[\phi=f(t)+f^\prime(t)s.\]

Since $F$ is a complex Randers metric, $f(t)\neq 0$.

If $ g(t)=\frac{tf^\prime(t)-f(t)}{2t}$, substituting it into the equation of weakly K\"ahler condition again and rearranging all the terms,  one can obtain
\[\{B_1(t)\sqrt{f+gs}\sqrt{s}+B_2(t)(\sqrt{s})^2+B_3(t)\sqrt{f+gs}(\sqrt{s})^3+B_4(t)(\sqrt{s})^4\}f(t)=0\]
where 
\[B_1(t)=t\sqrt{h}(tf^\prime+f)(tf^\prime+f-2th).\]
Therefore 
\[B_1(t)=B_2(t)=B_3(t)=B_4(t)=0.\]
So $B_1(t)=0$ deduces that one of the following three equations holds
\[h(t)=0,\quad tf^\prime(t)+f(t)=0\quad \text{or} \quad h(t)=\frac{tf^\prime(t)+f(t)}{2t}.\]

For the case of $h(t)=0$, we have discussed previously. If $tf^\prime(t)+f(t)=0$ i.e. $f(t)=\frac{c}{t}$ with $c$ a constant, then 
\[\phi=\big(\sqrt{\frac{c(t-s)}{t^2}}+\sqrt{h(t)s}\big)^2\]
which means the Hermitian part $\sqrt{\frac{c(t-s)}{t^2}}$ is not positive defined.  Therefore we have
\[h(t)=\frac{tf^\prime(t)+f(t)}{2t}.\]
\end{proof}

\begin{remark} It is easy to check that if $f(t)>0$ and $f^{\prime}(t)>0$, the weakly K\"ahler complex Randers metric in above theorem is strongly pseudo-convex.
\end{remark}

Furthermore, we can prove the following uniformization theorem in complex Finsler geometry. 

\begin{theorem} 
Let $F$ be an unitary invariant complex Randers metric defined on a domain $D\subset \mathbb{C}^n$. Assume $F$ is not a Hermitian metric. If $F=\sqrt{r\phi(t,s)}$ is a weakly K\"ahler Finsler metric and has a constant holomorphic curvature $k$ if and only if 
\begin{enumerate}
\item $k=4$, $\phi=(\sqrt{\frac{t}{c^2+t^2}-\frac{t^2}{(c^2+t^2)^2}s}+\sqrt{\frac{c^2}{(c^2+t^2)^2}s})^2$ defined on $D=\mathbb{C}^n\setminus \{0\}$;
\item $k=0$, $\phi=c(\sqrt{t}+\sqrt{s})^2$ defined on $D=\mathbb{C}^n$;
\item $k=-4$, $\phi=\sqrt{\frac{t}{c^2-t^2}+\frac{t^2}{(c^2-t^2)^2}s}+\sqrt{\frac{c^2}{(c^2-t^2)^2}s}$ defined on $D=\{z: |z|<\sqrt{c}\}$
\end{enumerate}
where $c$ is a positive constant. 
\end{theorem}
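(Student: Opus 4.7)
Since $F$ is a non-Hermitian weakly K\"ahler complex Randers metric, Theorem~\ref{wkr} forces
\[\phi=\bigl(\sqrt{f+gs}+\sqrt{hs}\bigr)^{2},\qquad g=\tfrac{tf'-f}{2t},\quad h=\tfrac{tf'+f}{2t},\]
for a single positive smooth function $f(t)$, and we record the convenient identities $g+h=f'$ and $h-g=f/t$. The theorem is thereby reduced to characterizing those $f$ for which $K_{F}$ is the prescribed constant $k$. Sufficiency is a direct verification: each proposed $\phi$ corresponds respectively to $f(t)=t/(c^{2}+t^{2})$, $ct$, or $t/(c^{2}-t^{2})$, and substitution into the weakly K\"ahler curvature formula at the end of Section~3 recovers $K_{F}=k$. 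I concentrate below on necessity.

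The plan is first to compute
\[U=\frac{s\phi+s(t-s)\phi_{s}}{\phi},\qquad W=\frac{\phi_{t}+\phi_{s}}{\phi}\]
explicitly by differentiating the Randers form of $\phi$ (the two identities above keep the expressions manageable), then to substitute into
\[K_{F}=-\frac{2}{\phi}\Bigl\{s(W_{t}+W_{s})-s^{2}(t-s)\frac{W_{s}^{2}}{U_{s}}+W\Bigr\}\]
and impose $K_{F}\equiv k$. The resulting identity must hold for all admissible $(t,s)$. After clearing denominators, its left-hand side becomes a polynomial in the two algebraically independent quantities $\sqrt{s}$ and $\sqrt{f+gs}$; equating each coefficient (a function of $t$ alone) to zero should yield a system of ODEs that collapses to the single first-order equation
\[\Bigl(\frac{t}{f(t)}\Bigr)'=\frac{k\,t}{2}.\]
Its general solution $f(t)=t/(c^{2}+kt^{2}/4)$, with $c>0$, then specializes (after absorbing a factor into $c$ in the $k=0$ case) to the three advertised families $f=t/(c^{2}+t^{2})$, $f=ct$ and $f=t/(c^{2}-t^{2})$, and substitution back into the Randers form for $\phi$ reproduces the expressions in the statement. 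The domain $D$ in each case is then determined by where $f>0$ and $h\ge 0$ are smooth and Zhong's strong pseudo-convexity inequalities hold: the singular behaviour of $f$ at $t=0$ forces $D=\mathbb{C}^{n}\setminus\{0\}$ when $k=4$, allows all of $\mathbb{C}^{n}$ when $k=0$, and the pole of $f$ confines $D$ to an open ball when $k=-4$.

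The main technical obstacle is the coefficient analysis in the second paragraph: because of the two nested square roots, the identity $K_{F}=k$ could \emph{a priori} impose several mutually inconsistent constraints on $f$. The anticipated saving grace is a cancellation entirely parallel to what already occurs in the proof of Theorem~\ref{wkr}, where the weakly K\"ahler equation \eqref{nKaeq} collapses cleanly once $g$ and $h$ are expressed in terms of $f$. Riding on the identities $g+h=f'$ and $h-g=f/t$ produced by that argument, the present system should similarly collapse to $(t/f)'=kt/2$ with no residual conditions, after which the classification reduces to reading off the three integration constants according to the sign of $k$.
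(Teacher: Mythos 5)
Your proposal is correct and follows essentially the same route as the paper: reduce to the one-parameter family of Theorem~\ref{wkr}, substitute $U$ and $W$ into the weakly K\"ahler curvature formula, and extract an ODE for $f$ from the coefficients of the resulting polynomial in $\sqrt{s}$ and $\sqrt{f+gs}$; your unified equation $(t/f)'=kt/2$ is exactly the paper's three conditions $tf'+\tfrac{k}{2}tf^{2}-f=0$ for $k=4,0,-4$. The residual worry you flag about extra coefficient constraints is resolved in the paper the same way you anticipate: only the leading coefficient is used to pin down $f$, and the direct sufficiency verification confirms no inconsistency remains.
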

 \begin{proof}
 The sufficiency is obvious by a direct computation. We only need to prove the necessity. 
 By Theorem \ref{wkr}, the complex Randers metric $F$ can be written as
 \[\phi=\Big (\sqrt{f(t)+\frac{tf^{'}(t)-f(t)}{2t}s}+\sqrt{\frac{tf^{'}(t)+f(t)}{2t}s}\Big )^2.\]
 Then we have
 \[U=\frac{s(f+tg)+t\sqrt{s(f+gs)h}}{\sqrt{f+gs}(\sqrt{f+gs}+\sqrt{hs})},\quad W=\frac{(h+sh^\prime)\sqrt{f+gs}+(f^\prime+g+sg^\prime)\sqrt{sh}}{(\sqrt{f+gs}+\sqrt{hs})\sqrt{s(f+gs)h}}\]
 where $g=\frac{tf^\prime(t)-f(t)}{2t}$ and $h=\frac{tf^\prime(t)+f(t)}{2t}$.
 
 If the holomorphic curvature $k=0$, one can get
 \[K_F=-\frac{2}{\phi}\{s(W_t+W_s)-s^2(t-s)\frac{W_s^2}{U_s}+W\}=0.\]
 Plugging $U$ and $W$ into the above equation will yield
 \[A_0(t)\sqrt{f+gs}+A_1(t)\sqrt{s}+\dots+A_7(t)(\sqrt{s})^7=0\]
 where $A_0(t)=t^4\sqrt{h}f^2(tf^\prime+f)(tf^\prime-f)$. Hence $A_0(t)=0$. Since $F$ is not a Hermitian metric, we know that
 \[tf^\prime-f=0.\]
 It implies that $f(t)=ct$ and $\phi=c(\sqrt{t}+\sqrt{s})^2$.
 
 If the holomorphic curvature $k=4$, one can get
 \[K_F=-\frac{2}{\phi}\{s(W_t+W_s)-s^2(t-s)\frac{W_s^2}{U_s}+W\}=4.\]
 Plugging $\phi$, $U$ and $W$ into the above equation will yield
 \[B_0(t)\sqrt{f+gs}+B_1(t)\sqrt{s}+\dots+B_7(t)(\sqrt{s})^7=0\]
 where $B_0(t)=t^4\sqrt{h}f^2(tf^\prime+f)(tf^\prime+2tf^2-f)$. Hence $B_0(t)=0$. From it, we know that
 \[tf^\prime+2tf^2-f=0.\]
 It implies that $f(t)=\frac{t}{c^2+t^2}$ and $\phi=(\sqrt{\frac{t}{c^2+t^2}-\frac{t^2}{(c^2+t^2)^2}s}+\sqrt{\frac{c^2}{(c^2+t^2)^2}s})^2$.
 
 If the holomorphic curvature $k=-4$, one can get
 \[K_F=-\frac{2}{\phi}\{s(W_t+W_s)-s^2(t-s)\frac{W_s^2}{U_s}+W\}=-4.\]
 Plugging $\phi$, $U$ and $W$ into the above equation will yield
 \[C_0(t)\sqrt{f+gs}+C_1(t)\sqrt{s}+\dots+C_7(t)(\sqrt{s})^7=0\]
 where $C_0(t)=t^4\sqrt{h}f^2(tf^\prime+f)(tf^\prime-2tf^2-f)$. Hence $C_0(t)=0$. From it, we know that
 \[tf^\prime-2tf^2-f=0.\]
 It implies that $f(t)=\frac{t}{c^2-t^2}$ and $\phi=\sqrt{\frac{t}{c^2-t^2}+\frac{t^2}{(c^2-t^2)^2}s}+\sqrt{\frac{c^2}{(c^2-t^2)^2}s}$.

 \end{proof}


\begin{thebibliography}{100}

\bibitem{AM}Aldea, N. and Munteanu G., On complex Finsler spaces with Randers metric, {\em J. Korean Math. Soc.} 46 (2009) 946-966.

\bibitem{AP} Abate, M. and Patrizio, G., Finsler metrics---A Global Approach with Applications to Geometric Function Theory, Lecture Notes in Mathematics, vol. 1591 (Springer, 1994)

\bibitem{CS} Chen, B. and Shen, Y., K\"ahler Finsler metrics are actually strongly K\"ahler, {\em Chin. Ann. Math.} 30 B (2) (2009) 173-178.

\bibitem{CS1} Chen, B. and Shen, Y., On complex Randers metrics, {\em Internat. J. Math.} 8 (2010), 971-986. 

\bibitem{Tian} Tian, G. Canonical metrics in K\"ahler geomtry, Lectures in Mathematics ETH Zürich. Birkhäuser Verlag, Basel, 2000.

\bibitem{WXZ} Wang, K., Xia, H. and Zhong, C. On $U(n)$-invariant strongly convex complex Finsler metrics, accepted by Scinece China Mathematics.

\bibitem{XZh1} Xia, H. and Zhong, C.  A classification of unitary invariant weakly complex Berwald metrics of constant holomorphic curvature, {\em Differential Geom. Appl.} 43 (2015), 1-20.

\bibitem{XZh2} Xia, H. and Zhong, C. On a class of Smooth complex Finsler metrics, {\em Results Math.} 71 (2017), 657-686.


\bibitem{Zhong} Zhong, C.  On unitary invariant strongly pseudo-convex complex Finsler metrics, {\em Differential Geom. Appl.} 40 (2015), 159-186.

\bibitem{Zhou} Zhou, L. Spherically symmetric Finsler metrics in $R^n$, {\em Publ. Math. Debrecen} 80 (2012), no. 1-2, 67-77.










\end{thebibliography}
\end{document}